\newcommand{\RR}{\mathbb{R}}
\newtheorem{theorem}{Theorem}
\theoremstyle{definition}
\newtheorem{definicja}{Definition}
\theoremstyle{definition}
\tikzset{
	partial ellipse/.style args={#1:#2:#3}{
		insert path={+ (#1:#3) arc (#1:#2:#3)}
	}
}
\newcommand {\bezierq}[5]{
	\newdimen\pxa
	\newdimen\pya
	\newdimen\pxb
	\newdimen\pyb
	\newdimen\pxc
	\newdimen\pyc
	\newdimen\pxd
	\newdimen\pyd
	\newdimen\pxe
	\newdimen\pye
	\pgfextractx{\pxa}{#1}
	\pgfextracty{\pya}{#1}
	\pgfextractx{\pxb}{#2}
	\pgfextracty{\pyb}{#2}
	\pgfextractx{\pxc}{#3}
	\pgfextracty{\pyc}{#3}
	\pgfextractx{\pxd}{#4}
	\pgfextracty{\pyd}{#4}
	\pgfextractx{\pxe}{#5}
	\pgfextracty{\pye}{#5}
	\foreach \t in {0.05,0.1,...,1}{
		\pgfmathsetmacro{\pxf}{\pxa*(1-\t)^4 + \pxb*4*\t*(1-\t)^3 + \pxc*6*\t^2*(1-\t)^2 + \pxd*4*(1-\t)*\t^3 + \pxe*\t^4}
		\pgfmathsetmacro{\pyf}{\pya*(1-\t)^4 + \pyb*4*\t*(1-\t)^3 + \pyc*6*\t^2*(1-\t)^2 + \pyd*4*(1-\t)*\t^3 + \pye*\t^4}
		\pgfmathsetmacro{\q}{\t-0.05}
		\pgfmathsetmacro{\pxi}{\pxa*(1-\q)^4 + \pxb*4*\q*(1-\q)^3 + \pxc*6*\q^2*(1-\q)^2 + \pxd*4*(1-\q)*\q^3 + \pxe*\q^4}
		\pgfmathsetmacro{\pyi}{\pya*(1-\q)^4 + \pyb*4*\q*(1-\q)^3 + \pyc*6*\q^2*(1-\q)^2 + \pyd*4*(1-\q)*\q^3 + \pye*\q^4}
		\draw (\pxi pt,\pyi pt)--(\pxf pt,\pyf pt);
	}
}
\title[A generalization of Clairaut's formula\dots]{A generalization of Clairaut's formula and its applications}
\author{Vadym Koval}
\address{Jagiellonian University, Faculty of Mathematics and Computer Science, \L ojasiewicza 6, 30-348 Krak\'ow, Poland}
\email{vadym.koval@student.uj.edu.pl}
\keywords{Clairaut's formula, geodesics, medial axis, subanalytic geometry}
\subjclass{53C22, 32B30}
\date{June 27th 2023, revised September 28th 2023}
\begin{document}
	
	\maketitle

	\begin{abstract}
		The main purpose of this article is to study conditions for a curve on a submanifold $M\subset\mathbb{R}^n$, constructed in a particular way involving the Euclidean distance to $M$, to be a geodesic. We also present the naturally arising generalization of Clairaut's formula needed for the generalization of the main result to higher dimensions.
	\end{abstract}
	
	\section{Introduction}
	
	The motivation for this article comes from Hardt's conjecture. In 1982 R. Hardt posed a hypothesis in \cite{Hardt} which is still open. In order to state it properly, we introduce the next definitions.
	\begin{definicja}
		A subset $S\in \RR^n$ is called \textit{semi-algebraic}, if it is a finite sum of sets $S_k$, where each set $S_k$ is the set of solutions of a finite system of polynomial equations and inequalities.
	\end{definicja}
	\begin{definicja}
		A function $f:A\to B$, where $A\subset \RR^n$, $B\subset 
		\RR^m$ is called semi-algebraic if its graph is a semi-algebraic subset of $\RR^{n+m}$.
	\end{definicja}
	
	\noindent From the general theory we know that a projection of a semi-algebraic set is semi-algebraic (Tarski-Seidenberg Theorem), therefore the domain of a semi-algebraic function is semi-algebraic as well. Moreover, a connected semi-algebraic set is path connected with rectifiable paths. More on that topic can be found e.g. in the survey \cite{long}.
	
	\noindent Consider a connected semi-algebraic set $X$ endowed with the metric $d$ defined by the length of the shortest paths connecting two points in that set. Formally, \begin{align*}d(x,y)=\inf \{ lg(\gamma)|\gamma:[0,1]\to X, &\textrm{a rectifiable path such that}\\ &\gamma(0)=x,\gamma(1)=y\},\end{align*} where $lg(\gamma)$ is the length of the continuous curve $\gamma$. From the general theory it follows that the definition is well-posed. Hardt's conjecture in its simplest form says that such a function $d$ is semi-algebraic (to be more precise, the question pertains to \textit{subanalytic geometry} which is a generalization of semi-algebraic geometry, and was stated in this category, but to make it accessible to a larger audience we restrict ourselves to the semi-algebraic case, which is easier to present). As a matter of fact, the best result obtained in this context up to now is due to Kurdyka and Orro \cite{KO}. In the present article we will discuss one possible approach to this problem in the case of a smooth manifold. Note that a semi-algebraic or subanalytic set is smooth at its generic point.

	\bigskip
	This approach relies on a special but natural local construction of curves on a submanifold. The construction presented in the next section seems at first to give always a geodesic but as it eventually turns out, there is a simple counter-example, see section \ref{cex}. On the other hand, when examining the counter-example we are led to an interesting generalization of Clairaut's formula which we give in the last section.
	
	\section{A natural construction}
	
	A natural approach to Hardt's conjecture is based on the idea to constructively describe the shortest path between two points of the set, at least locally. Let us begin with a two dimensional smooth submanifold $M$ of $\RR^3$ and points $A$ and $B$ in it which we want to connect by a geodesic. We perform the following construction: we connect points $A$ and $B$ by a segment in the ambient space and for each point $x$ of the segment $[A,B]$ we choose a point $m(x)$ realizing the Euclidean distance to $M$. As observed e.g. in \cite{Szkielety}, the vector $x-m(x)$ is perpendicular to $M$ at $m(x)$, i.e. it belongs to the normal space of $M$ at $m(x)$. Is it true that the described curve $m([A,B])$ (if it indeed is a curve) is the shortest path (or at least geodesic) on $M$? We will answer that question asked by M. Denkowski.\\
	Note that the question about geodesicity is natural, as geodesics are `locally shortest' curves on the surface. For the sake of completeness we give a definition of a geodesic in our setting.
	
	\begin{definicja}
		A smooth curve $\gamma(t)$ on a submanifold $M$ is called a \textit{geodesic} if $||\gamma'(t)||=1$ (i.e. the curve is parametrized by its length) and $\gamma''(t)\perp T_{\gamma(t)}M$ (i.e. its acceleration is perpendicular to the tangent space).
	\end{definicja}
	
	\noindent We will note that the definition mentioned above makes sense already for twice differentiable curves on manifolds $M\subset\mathbb{R}^n$ of class $\mathcal{C}^2$. Moreover, the perpendicularity condition itself implies that the speed is constant, as $\frac{d}{dt}||\gamma'(t)||^2=2\langle \gamma'(t), \gamma''(t)\rangle=0$, because $\gamma'(t)\in T_{\gamma(t)}M$. The definition only normalizes the speed.
	
	\vspace{5mm}
	
	\noindent Let us understand when the construction described above makes sense. First of all, for the curve to be well-defined we need to know that the closest point is unique. We naturally arrive at the assumption that the segment $[A,B]\subset\mathbb{R}^3$ must be disjoint with the so-called \textit{medial axis} of the set $M$.
	\begin{definicja}
		The \textit{medial axis} $S_X$ of a closed set $X$ in $\RR^n$ is a set of those points of the space which have more than one point realizing the (Euclidean) distance to the set. Formally, $$S_X=\{x\in \RR^n|\exists u, v\in X: u\neq v, \: dist(x,X)=||x-u||=||x-v||\}.$$
	\end{definicja}
	\noindent More about medial axes can be found in \cite{Szkielety}. In particular, the Nash Lemma mentioned there guarantees the existence of a neighborhood of $M$ disjoint with $S_M$ in the case when $M$ is a $\mathcal{C}^k$-submanifold with $k\geq 2$. Moreover, the neighborhood can be chosen such that the function $x\mapsto m(x)$ is of class $\mathcal{C}^{k-1}$. Unfortunately, such a neighborhood may not (and typically will not) be convex. Of course, it is possible at least locally to connect points on $M$ with segments lying entirely in such a neighborhood.\\
	The continuity of the constructed curve could be obtained directly. It luckily happens (a proof can be found in \cite{continuity}, Theorem 3.3, see also \cite{Szkielety}) that we get it for free from the disjointedness of $[A,B]$ with the medial axis. From the above-mentioned theorem of Tarski-Seidenberg it follows that the constructed curve is semi-algebraic when $M$ is such (and subanalytic in the subanalytic case, cf. \cite{long}).
	\vspace{5mm}
	
	\noindent Let us consider a simple example of $M$ -- a sphere. It is easy to see that the constructed curve is indeed a geodesic. Moreover, from the two possible geodesics (given by the great circle passing through the two given points) connecting two points on the sphere, we obtain the shorter one, whenever the points are not antipodal.\\

	\noindent The question asked by M. Denkowski, which we are investigating here, if answered positively in the case of semi-algebraic or subanalytic surfaces of class $\mathcal{C}^2$, would locally give the affirmative answer to Hardt's conjecture. Moreover, the general theory (as exposed e.g. in \cite{long}) says that every semi-algebraic or subanalytic set admits a decomposition (stratification) into such manifolds. Therefore, is would potentially give other possibilities to attack Hardt's hypothesis.
	
	\section{Counterexample}\label{cex}
	As it turns out, there are counterexamples to the statement that such construction gives geodesics. One can come up with qualitative counterexamples, where it is evident that we do not obtain a geodesic. We will give an example of a very simple surface where we can compute the constructed curve exactly and compare it with the geodesics.\\
	Let us see how the construction works on a cylinder. Let the cylinder have radius $R$, height $h$ and $0z$ as its axis (and its medial axis). Let the base of the cylinder lie on the plane $0xy$. Without loss of generality we can assume that we connect two points on the two opposite bases of the cylinder. Moreover, by rotating the cylinder we can assume that the lower point $X$ has coordinates $(-R,0,0)$. The upper point $Y$ has coordinates $(a,b,h)$ with $a^2+b^2=R^2$. That fully describes the system we are dealing with and one can see it on the illustration below. 
	
	\begin{tikzpicture}
		
		\draw[very thick,dashed] (0,0) [partial ellipse=0:180:1.5cm and 0.75cm];
		\draw[very thick] (0,0) [partial ellipse=180:360:1.5cm and 0.75cm];
		\draw (-2,0) -- (-1.5,0);
		\draw[dashed] (-1.5,0) -- (1.5,0);
		\draw[->] (1.5,0) -- (6,0) node[anchor=west]{$x$};
		\draw[very thick] (0,4)  ellipse (1.5cm and 0.75cm);
		\draw[dashed] (0,-0.75) -- (0,4);
		\draw[->] (0,4) -- (0,6) node[anchor=west]{$z$};
		\filldraw[black] (0,0) circle (2pt);
		\filldraw[black] (0,4) circle (2pt);
		\draw[very thick] (1.5,0) -- (1.5,4);
		\draw[very thick] (-1.5,0) -- (-1.5,4);
		\draw (0,-0.75) -- (0,-2);
		\draw[dashed] (-0.67,-0.67) -- (1,1);
		\draw[->] (-0.67,-0.67) -- (-2,-2) node[anchor=west]{$y$};
		\filldraw[black] (-1.5,0) circle (2pt) node[anchor=north east]{$(-R,0,0)$};
		\filldraw[black] (-0.67,4.67) circle (2pt) node[anchor=south east]{$(a,b,h)$};
	\end{tikzpicture}

	\noindent We want to somehow describe the constructed curve. Every point $T$ on the segment $[X,Y]$ is uniquely determined by its height (i.e. the coordinate $z$). Let us denote it by $t$. Consider the section of the cylinder by the plane at the height $t$. Clearly, the point $T$ is projected onto the intersection of a ray with direction $T-(0,0,t)$, with the surface of the cylinder, as long as points $(-R,0)$ and $(a,b)$ are not antipodal (which corresponds to the fact that the segment $[X,Y]$ does not intersect the axis of the cylinder, i.e. the $z$-axis. Simple calculations show that the coordinates of $T$ are $$\bigg(-R+(a+R)\dfrac{t}{h},b\dfrac{t}{h},t\bigg).$$
	Therefore, its projection $T'$ onto the surface has coordinates 
	{\small
		$$\Bigg(R\dfrac{-R+(a+R)\dfrac{t}{h}}{\sqrt{\bigg(-R+(a+R)\dfrac{t}{h}\bigg)^2+\bigg(b\dfrac{t}{h}\bigg)^2}},R\dfrac{b\dfrac{t}{h}}{\sqrt{\bigg(-R+(a+R)\dfrac{t}{h}\bigg)^2+\bigg(b\dfrac{t}{h}\bigg)^2}},t\Bigg).$$
	}
	
	\begin{tikzpicture}
		
		\draw[very thick,dashed] (0,0) [partial ellipse=0:180:1.5cm and 0.75cm];
		\draw[very thick] (0,0) [partial ellipse=180:360:1.5cm and 0.75cm];
		\draw (-2,0) -- (-1.5,0);
		\draw[dashed] (-1.5,0) -- (1.5,0);
		\draw[->] (1.5,0) -- (6,0) node[anchor=west]{$x$};
		\draw[very thick] (0,4)  ellipse (1.5cm and 0.75cm);
		\draw[dashed] (0,-0.75) -- (0,4);
		\draw[->] (0,4) -- (0,6) node[anchor=west]{$z$};
		\filldraw[black] (0,0) circle (2pt);
		\filldraw[black] (0,4) circle (2pt);
		\draw[very thick] (1.5,0) -- (1.5,4);
		\draw[very thick] (-1.5,0) -- (-1.5,4);
		\draw (0,-0.75) -- (0,-2);
		\draw[dashed] (-0.67,-0.67) -- (1,1);
		\draw[->] (-0.67,-0.67) -- (-2,-2) node[anchor=west]{$y$};
		\filldraw[black] (-1.5,0) circle (2pt) node[anchor=north east]{$(-R,0,0)$};
		\filldraw[black] (-0.67,4.67) circle (2pt) node[anchor=south east]{$(a,b,h)$};
		\draw[very thick,dashed] (0,1.75) [partial ellipse=0:180:1.5cm and 0.75cm];
		\draw[very thick] (0,1.75) [partial ellipse=180:360:1.5cm and 0.75cm];
		\filldraw[black] (0,1.75) circle (2pt) node[anchor=west]{$(0,0,t)$};
		\draw[dashed] (-1.5,0) -- (-0.67,4.67);
		\draw[dashed] (0,1.75) -- (-1.4,1.6);
		\filldraw[black] (-1.22,1.62) circle (2pt) node[anchor= south west]{$T$};
	\end{tikzpicture}

	\noindent Now we should check whether the constructed curve is geodesic. The natural approach is to use cylindrical coordinates. In such a coordinate system geodesics have a defining property: the angle depending on the height in an affine way. It is equivalent to saying that the derivative of the angle (as a function of the height, let us denote it $\alpha(t)$) is constant. The sine of the angle could be calculated from the coordinates of $T'$: $$\sin\alpha(t)=\dfrac{-R+(a+R)\dfrac{t}{h}}{\sqrt{\bigg(-R+(a+R)\dfrac{t}{h}\bigg)^2+\bigg(b\dfrac{t}{h}\bigg)^2}}.$$
	Then we can find $(\sin\alpha(t))'$, which we will rewrite as $\cos\alpha(t)\cdot \alpha'(t)$, where the cosine is calculated in the same manner as the sine. Therefore, the angle derivative is
	$$\alpha'(t)=\dfrac{bR}{h}\dfrac{1}{\bigg(-R +(R+a)\dfrac{t}{h}\bigg)^2+\bigg(b\dfrac{t}{h}\bigg)^2}.$$
	We conclude that it is constant only in the case when $a=-R$, $b=0$. That corresponds to the trivial case when the shortest path between two points is just the segment between them.

	\section{When the construction does work}
	
	Let us look at the next figure illustrating the result of construction described before. 
	\begin{center}
		\begin{tikzpicture}
			\draw[very thick, dashed] (0,4) -- (4,0);
			\draw[very thick] (0,4) .. controls (2,4) and (4,2) .. (4,0);
			\filldraw[black] (0,4) circle (2pt) node[anchor=east]{$A$};
			\filldraw[black] (4,0) circle (2pt) node[anchor=west]{$B$};
			\draw (1,3.82) .. controls (1.1,4) and (1.1,3).. (1,2.5);
			\draw[dashed] (1,3.82) .. controls (0.8,4) and (0.8,3) .. (0.8,2.5);
			\draw (2,3.36) .. controls (2.2,3.5) and (2.1,2).. (1.9,1.6);
			\draw[dashed] (2,3.36) .. controls (1.8,3.5) and (1.7,2) .. (1.7,1.6);
			\draw (3,2.45) .. controls (3.2,2.6) and (3,1).. (2.8,0.7);
			\draw[dashed] (3,2.45) .. controls (2.8,2.6) and (2.4,1) .. (2.5,0.9);
			
		\end{tikzpicture}
	\end{center}

	\noindent At the first glance it appears that the constructed curve is a geodesic. There is a good reason for that. The picture suggests that the constructed curve $\gamma$ and the segment $[A,B]$ are coplanar. This is a sufficient condition for a curve to be a geodesic. The reason for this is the following: after reparametrization by arc-length (i.e. $||\gamma'||\equiv 1$, such a parametrization is possible for curves of class $\mathcal{C}^1$) the acceleration vector is perpendicular to the curve and lies in the plane $(ABX)$. Therefore, the acceleration is pointing directly at the point which was `projected' onto $X$. Therefore, the acceleration is perpendicular to the tangent space at the point $X$ to the respective sphere. However, that tangent space coincides with the tangent space to the initial manifold, therefore we get the perpendicularity of acceleration to the tangent space.
	
	\begin{center}
		\begin{tikzpicture}
			\draw[very thick, dashed] (0,4) -- (4,0);
			\draw[very thick] (0,4) .. controls (2,4) and (4,2) .. (4,0);
			\filldraw[black] (0,4) circle (2pt) node[anchor=east]{$A$};
			\filldraw[black] (4,0) circle (2pt) node[anchor=west]{$B$};
			\filldraw[black] (2.5,1.5) circle (2pt);
			\draw (2.5,1.5) circle (1);
			\filldraw[black] (3.34,2.08) circle (2pt) node[anchor=west]{$X$};
			\draw[rotate around={39:(2.5,1.5)}] (2.5,1.5) [partial ellipse=180:360:1 and 0.3];
			\draw[dashed, rotate around={39:(2.5,1.5)}] (2.5,1.5) [partial ellipse=0:180:1 and 0.3];
			\draw[->] (3.34,2.08) -- (2.84,1.68);

		\end{tikzpicture}
	\end{center}

	\begin{theorem}\label{main}
		The described method gives geodesic if and only if the constructed curve and the segment $[A,B]$ are coplanar.
	\end{theorem}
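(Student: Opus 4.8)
One direction of the equivalence — that coplanarity of the curve with the chord $[A,B]$ forces it to be a geodesic — is exactly the content of the discussion above, so my plan is to prove the converse: if the constructed curve $\gamma$ is a geodesic, then it is coplanar with $[A,B]$. First I would fix notation. Parametrize $\gamma$ by arc length $s\in[0,\ell]$, so that $\gamma(0)=A$ and $\gamma(\ell)=B$ (since $m(A)=A$, $m(B)=B$); for each $s$ let $x(s)=A+\tau(s)(B-A)$ be the point of the segment with $m(x(s))=\gamma(s)$, so $v:=B-A$ is a fixed direction and $\tau$ is strictly increasing; write $x(s)-\gamma(s)=\lambda(s)n(s)$ with $n(s)$ a unit normal to $M$ at $\gamma(s)$ and $\lambda(s)=\operatorname{dist}(x(s),M)$, which is positive on $(0,\ell)$ and vanishes at the endpoints. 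I would work in the nondegenerate situation, where moreover $\tau'>0$ on $(0,\ell)$ (the case when the chord meets $M$ at interior points splits into subintervals treated in the same way, or is trivial). Along $\gamma$ I would then use the Darboux frame $(T,u,n)$ with $T=\gamma'$ and $u=n\times T$: the hypothesis that $\gamma$ is a geodesic means precisely $\kappa_g:=\langle\gamma'',u\rangle\equiv0$, so the Darboux equations read $T'=\kappa_n n$, $u'=\tau_g n$, $n'=-\kappa_n T-\tau_g u$, with $\kappa_n,\tau_g$ the normal curvature and the geodesic torsion of $\gamma$.

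The heart of the argument is a short differential identity. Differentiating $x(s)=\gamma(s)+\lambda(s)n(s)$ and substituting $n'=-\kappa_n T-\tau_g u$ gives $x'(s)=(1-\lambda\kappa_n)T-\lambda\tau_g u+\lambda' n$, while on the other hand $x'(s)=\tau'(s)\,v$; comparing the components along $n$ and along $u$ yields $\langle v,n(s)\rangle=\lambda'/\tau'$ and $\langle v,u(s)\rangle=-\lambda\tau_g/\tau'$. Setting $\varphi(s):=\langle v,u(s)\rangle$ and differentiating, the relation $u'=\tau_g n$ together with the first identity gives $\varphi'=\tau_g\langle v,n\rangle=\tau_g\lambda'/\tau'$; the second identity lets me replace $\tau_g/\tau'$ by $-\varphi/\lambda$, so $\varphi'=-\varphi\lambda'/\lambda$, i.e. $(\varphi\lambda)'\equiv0$ on $(0,\ell)$.

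Hence $\varphi\lambda$ is constant on $(0,\ell)$. Since $|\varphi(s)|=|\langle v,u(s)\rangle|\le\|v\|$ stays bounded while $\lambda(s)\to0$ as $s\to0^+$, that constant must be $0$, so $\varphi\equiv0$ on $(0,\ell)$; by the second identity $\tau_g\equiv0$ there (as $\lambda,\tau'>0$), and by continuity on all of $[0,\ell]$. But then $u'=\tau_g n\equiv0$, so $u$ is a constant vector $u_0$, and $\langle\gamma'(s),u_0\rangle=\langle T(s),u(s)\rangle=0$ for all $s$; integrating, $\gamma$ lies in the affine plane through $A$ orthogonal to $u_0$. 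As $B=\gamma(\ell)$ lies in this plane as well, the whole segment $[A,B]$ does too, and so $\gamma$ and $[A,B]$ are coplanar, as required.

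The step I expect to be the real obstacle — and the one carrying the idea — is recognizing that the fixed chord direction $v$, paired with the binormal $u$ of the geodesic, produces a scalar satisfying such an elementary linear ODE, and that the boundary behaviour $\lambda\to0$ (with the trivial bound $|\varphi|\le\|v\|$) leaves only the zero solution; once that is in hand, vanishing of the geodesic torsion — equivalently, the fact that a geodesic produced by this construction is automatically a line of curvature — and hence planarity follow formally. What remains is routine bookkeeping: the regularity needed for the computation (it is enough that $M$ and the constructed curve be of class $\mathcal{C}^2$ with $\tau'>0$ on $(0,\ell)$) and the degenerate configurations noted above.
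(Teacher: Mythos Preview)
Your argument is correct, and it takes a genuinely different route from the paper's. The paper replaces $M$ by the auxiliary canal surface $K=\partial\bigcup_{x\in[A,B]}\overline{B}(x,r(x))$, observes that $K$ is a surface of revolution about the line through $A,B$ sharing the same tangent plane as $M$ along the constructed curve, and then invokes Clairaut's relation: $R\sin\theta$ is constant along a geodesic of $K$, where $R$ is the distance to the axis and $\theta$ the angle with the meridian; since $R\to 0$ at the endpoint $A$, the constant vanishes and the curve is a meridian, hence coplanar with $[A,B]$. You bypass $K$ entirely, work in the Darboux frame of $M$, and produce the conserved quantity $\lambda\varphi=\lambda\langle v,u\rangle$ directly from a first-order ODE.

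It is worth noting that your invariant and the paper's are the same object: a short computation (decompose $v$ in the frame $(T,u,n)$ and use that the meridian direction is the component of $v$ tangent to the surface) gives $R\sin\theta=\lambda\langle v,u\rangle/\|v\|$, so you have in effect rederived Clairaut's relation for this particular problem without naming it. What your approach buys is self-containment: you avoid introducing the canal surface and the separate technical verification that $|r'|<1$ needed for its smoothness. What the paper's approach buys is a cleaner geometric picture --- the hidden rotational symmetry is made explicit --- and a formulation that generalizes immediately to hypersurfaces in $\mathbb{R}^n$ via the generalized Clairaut formula proved later in the paper, whereas your Darboux-frame computation is tied to surfaces in $\mathbb{R}^3$.
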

	\begin{proof}
		We have already discussed the `if' part, hence we will concentrate on the second implication.\\
		Let us consider a two dimensional manifold $M$ in $\RR^3$ and points $A$, $B$ on that manifold. At every point $x$ of the segment $[A,B]$ we denote the distance from $x$ to $M$ by $r(x)$. From the assumptions we get that $B(x)=B(x,r(x))$ intersects with $M$ in exactly one point $m(x)$. Consider the so-called canal surface obtained as the boundary of the union of all the closed balls $B(x)$. Call it $K$. As it turns out, $K$ is a two dimensional $\mathcal{C}^1$ manifold with possible singularities at the points $A$ and $B$. That is so because $r'(x)^2<1$ (we will prove that later, see the next section). One can find a proof of the fact that $r'(x)^2<1$ implies the smoothness of $K$ in \cite{kanaly}.
		
		\begin{center}
			\begin{tikzpicture}
				\draw[very thick, dashed] (0,0) -- (4,4);
				\draw[very thick] (0,0) .. controls (1,3) and (3,4) .. (4,4);
				\draw[very thick, dashed] (0,0) .. controls (3,1) and (4,3) .. (4,4);
				\filldraw[black] (0,0) circle (2pt) node[anchor=east]{$A$};
				\filldraw[black] (4,4) circle (2pt) node[anchor=west]{$B$};
				\draw[dashed,rotate around={135:(1,1)}] (1,1) ellipse (0.57 and 0.3);
				\draw[dashed,rotate around={135:(2,2)}] (2,2) ellipse (0.8 and 0.4);
				\draw[dashed,rotate around={135:(3,3)}] (3,3) ellipse (0.67 and 0.3);
				\filldraw[black] (1,1) circle (1.5pt);
				\filldraw[black] (2,2) circle (1.5pt);
				\filldraw[black] (3,3) circle (1.5pt);
				\draw (-2,-1) .. controls (-1,-0.5) and (1,-0.5) .. (3,-1);
				\draw (3,-1) .. controls (4,1.5) and (5,3.5) .. (6,4);
				\draw (6,4) .. controls (4,4.5) and (2,4.5) .. (1,4);
				\draw (-2,-1) .. controls (-1,1.5) and (0,3.5) .. (1,4);
				
			\end{tikzpicture}
		\end{center}

		\noindent The main idea of the proof now is the following: instead of checking whether the curve is a geodesic for some complicated surface $M$ we can check whether it is a geodesic on $K$. It follows from the fact that a curve is a geodesic if and only if the acceleration of a point moving with constant speed along that curve is perpendicular to the tangent space. For every point $m(x)$ the tangent spaces to $M$ and $K$ coincide. Therefore, the invoked geodesicity criterion is the same for $M$ and $K$.

		\bigskip
		
		\noindent The manifold $K$ is easier to study, as it is a surface of revolution. Let us investigate how can a geodesic approaching the `endpoints' of a revolution surface look like. We claim that any geodesic approaching $A$ is a `trivial' one. By that we mean that it is the intersection of $K$ and some plane containing $[A,B]$. Such geodesics are called {\it meridians}. That will end the proof of the theorem.

		\bigskip
		
		\noindent We will prove this claim using Clairaut's formula. It says that the product $r\sin{\theta}$ is constant along a geodesic where $r$ is the distance to the rotation axis and $\theta$ is the angle formed by the curve with the meridian. It implies that the distance from the points on the geodesic to the revolution axis cannot tend to $0$. Therefore, the geodesic cannot reach the point $A$.
		
	\end{proof}

	\bigskip
	
	\noindent We can use the proven theorem in the case of a cylinder. It says that we will get a geodesic iff the axis of the cylinder and the segment between the points $A$, $B$ are coplanar. This happens only when the angle coordinates are equal or opposite (in natural cylindrical coordinates). If the angles are opposite, then the segment between them intersects the axis, which is ruled out by the construction. Therefore, the only case is when the points have the same angle, which agrees with calculations made earlier.

	\section{Technical details}
	
	\noindent Now we will prove that $r'(x)^2<1$. First of all, we will show that locally, $r$ satisfies the inequality $$|r(x')-r(x)|\leqslant k(x)||x'-x||$$ for some $k(x)<1$. That means that $r$ increases with a rate strictly smaller then $1$, therefore it cannot decrease with a rate bigger or equal to $1$. That will complete the proof.\\
	Let us take points $x$ and $x'$ on the segment $[A,B]$ such that $x'$ is close to $x$. Points $m(x)$ and $m(x')$ are the closest ones to $x,x'$ in $M$, respectively. We know that $||m(x)-x'||\geqslant ||m(x')-x'||=r(x')$, so that $$r(x')-r(x)\leqslant ||m(x)-x'||-r(x)$$
	and 
	\begin{align*}
		||m(x)-x'||^2&=||(m(x)-x)+(x-x')||^2=\\
		&=||m(x)-x||^2+||x-x'||^2+2\langle m(x)-x,x-x'\rangle=\\
		&=r(x)^2+||x-x'||^2+2\langle m(x)-x,x-x'\rangle
	\end{align*}
	Thus,
	\begin{align*}
		||m(x)-x'||-r(x)&=\frac{||x-x'||^2+2\langle m(x)-x,x-x'\rangle}{||m(x)-x'||+r(x)}=\\
		&=||x-x'||\cdot \frac{||x-x'||+2\left\langle m(x)-x,\frac{x-x'}{||x-x'||}\right\rangle}{||m(x)-x'||+r(x).}
	\end{align*}
	Note that $\frac{x-x'}{||x-x'||}$ has the same direction as $AB$, therefore it is perpendicular to $m(x)-x$. Thus the fraction in the last expression tends to $0$ as $x'$ tends to $x$ (the denominator in the limit is $2r(x)$). As a result, sufficiently close to $x$ we get a bound by some constant $0<k(x)<1$.
	
	\begin{center}
		\begin{tikzpicture}
			\draw (0,0) -- (5,3);
			\filldraw[black] (0,0) circle (2pt) node[anchor=east]{$A$};
			\filldraw[black] (5,3) circle (2pt) node[anchor=west]{$B$};
			\filldraw[black] (2,1.2) circle (2pt) node[anchor=north]{$X$};
			\filldraw[black] (3,1.8) circle (2pt) node[anchor=south]{$X'$};
			\filldraw[black] (0.5,2) circle (2pt) node[anchor=south east]{$m(X)$};
			\filldraw[black] (1,2.5) circle (2pt) node[anchor=south west]{$m(X')$};
		\end{tikzpicture}
	\end{center}

	\section{Generalization to higher dimensions}
	
	\noindent Now we will comment on the Clairaut formula. The classical statement is for two-dimensional surfaces of revolution in $\RR^3$. In order to generalize Theorem \ref{main} to higher dimensions we first need to generalize the formula to higher dimensional case. First of all, let us understand how to generalize the notion of a surface of revolution. What we want from such generalization is to encapsulate surfaces $K$ obtained in the proof of the first theorem. To be more precise, $\mathcal{C}^1$ manifolds obtained as a boundary of the union of balls centered at the points of a given segment. A natural candidate is the following.
	
	\begin{definicja}
		A hyper-surface in $\RR^n=\RR\times \RR^{n-1}$ is called a \textit{surface of revolution} if it can be described as $$K=\big\{(x,y)\in I\times \RR^{n-1}| \quad||y||^2=R(x)^2\big\}$$
		for some differentiable function $R\colon I \to \RR$, where $I\subset\RR$ is a segment. The line $\RR \times \{0\}^{n-1}$, sometimes just the segment $I\times\{0\}^{n-1}$, are called the \textit{axis} of the respective surface of revolution.
	\end{definicja} 
	
	\noindent Notice that the function $R$ in the above definition and the function $r$ from the proof of Theorem \ref{main} generally are not equal (they coincide only if they are constant). As $K$ is the set of zeroes of a function with gradient $2(R(x)R'(x),y)$, we deduce that it is always a submersion of the same class as $R$. Therefore, $K$ is a submanifold of $\RR^n$ of codimension $1$.\\
	Moreover, the manifold $K$ obtained in the proof of the Theorem \ref{main} is a surface of revolution. One way to see that is to note that the intersection of an arbitrary ball $B(x)$ with a hyperplane perpendicular to the axis passing through $y$ is an $(n-1)$-dimensional ball centered at $y$. Thus, we get that the boundary of the union of such intersections is an $(n-2)$-dimensional sphere centered at $y$ -- exactly what we would expect from a surface of revolution.\\

	\noindent The next notion we need to define is that of a meridian on such surfaces of revolution. The next definition seems to be the most natural.
	
	\begin{definicja}
		A curve on the surface of revolution $K$ is called a \textit{meridian} if it is an intersection of $K$ with some plane passing through the axis of $K$.
	\end{definicja}
	
	\noindent Notice that for each point on the surface of revolution there is exactly one meridian passing through it.\\
	Let us give a proof a generalization of Clairaut's formula in the situation considered, inspired by \cite{clairaut}.
	
	\begin{theorem}[Generalized Clairaut's formula]\label{clairaut}
		For a geodesic $\gamma(t)$ on a surface of revolution $K$ the expression $R(\gamma(t)) \cdot \sin(\angle(\gamma'(t),\text{mer}(\gamma(t))))$ is constant, where $R$ is the distance to the axis of $K$ and $\text{mer}(p)$ is the vector tangent to the meridian passing through $p$ in the positive direction. In particular, the expression $\gamma(t)\wedge \gamma '(t) \wedge \vec{x}$ is constant.
	\end{theorem}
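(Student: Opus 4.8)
The plan is to reduce the statement to the conservation of an ``angular momentum'' bivector, which generalizes the conserved quantity $R^{2}\dot\phi$ that drives the classical two-dimensional proof. Write $\gamma(t)=(x(t),y(t))\in\RR\times\RR^{n-1}$, so that $\|y(t)\|^{2}=R(x(t))^{2}$; we may assume $R>0$ along $\gamma$ (i.e. the geodesic avoids the axis), so that $\text{mer}(\gamma(t))$ and $\theta(t):=\angle(\gamma'(t),\text{mer}(\gamma(t)))$ make sense and the distance from $\gamma(t)$ to the axis equals $\|y(t)\|=R(x(t))$. Since $K$ is the zero set of $F(x,y)=\|y\|^{2}-R(x)^{2}$ with $\nabla F=2\bigl(-R(x)R'(x),y\bigr)$, the meridian through $(x,y)$ is $s\mapsto\bigl(s,\,R(s)y/\|y\|\bigr)$, with tangent vector $\mathbf m:=\bigl(1,\,R'(x)\,y/\|y\|\bigr)$. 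First I would check that $\langle\mathbf m,\nabla F\rangle=0$, so $\mathbf m\in T_{(x,y)}K$, and that $\{0\}\times y^{\perp}\subset T_{(x,y)}K$; since $\dim K=(n-1)=1+(n-2)$ and $\mathbf m\perp\bigl(\{0\}\times y^{\perp}\bigr)$, this yields the orthogonal splitting $T_{(x,y)}K=\RR\,\mathbf m\oplus\bigl(\{0\}\times y^{\perp}\bigr)$, with $\|\mathbf m\|^{2}=1+R'(x)^{2}$.

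Next I would decompose the velocity. Differentiating $\|y\|^{2}=R(x)^{2}$ gives $\langle y',y\rangle=R(x)R'(x)x'$, so $w:=y'-\tfrac{R'(x)x'}{R(x)}\,y$ is orthogonal to $y$, and $\gamma'=(x',y')=x'\mathbf m+(0,w)$ is exactly the decomposition of $\gamma'$ in the splitting above. Because $\gamma$ is unit-speed, $1=\|\gamma'\|^{2}=x'^{2}(1+R'^{2})+\|w\|^{2}=\cos^{2}\theta+\|w\|^{2}$, hence $\|w\|=\sin\theta$. Since $y\wedge y=0$ we have $y\wedge y'=y\wedge w$, and $w\perp y$ then gives
$$R(\gamma(t))\,\sin\theta(t)=\|y\|\cdot\|w\|=\|y\wedge w\|=\|y\wedge y'\| .$$
So it is enough to show that $y(t)\wedge y'(t)$ has constant norm; in fact I will show the bivector $\gamma\wedge\gamma'\wedge\vec x$ is outright constant, which implies both assertions at once (here $\vec x=e_{1}$ is the unit vector along the axis).

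For the conservation step, set $\omega(t):=\gamma(t)\wedge\gamma'(t)\wedge\vec x\in\Lambda^{3}\RR^{n}$. Expanding, every term containing $\vec x=e_{1}$ twice dies, leaving $\omega=e_{1}\wedge(0,y)\wedge(0,y')$; since $e_{1}$ is orthogonal to $\{0\}\times\RR^{n-1}$, we get $\|\omega\|=\|y\wedge y'\|=R\sin\theta$ by the previous paragraph. Now differentiate: $\vec x$ is constant and $\gamma'\wedge\gamma'=0$, so $\omega'=\gamma\wedge\gamma''\wedge\vec x$. By the definition of a geodesic $\gamma''(t)\perp T_{\gamma(t)}K$, hence $\gamma''(t)=\mu(t)\bigl(-R(x)R'(x),y(t)\bigr)$ for a scalar $\mu(t)$; and a direct computation gives $\gamma\wedge\bigl(-RR',y\bigr)=(x+RR')\,e_{1}\wedge(0,y)$, whose wedge with $\vec x=e_{1}$ vanishes. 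Therefore $\omega'\equiv 0$, so $\gamma\wedge\gamma'\wedge\vec x$ is constant and, taking norms, $R(\gamma(t))\sin\theta(t)$ is constant.

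The only genuine subtleties are the tangent-space splitting in the first step — which is what pins down ``the angle with the meridian'' in coordinates and identifies $R\sin\theta$ with $\|y\wedge y'\|$ — and the identity $\gamma\wedge\nabla F(\gamma)\wedge\vec x=0$ in the last step, which is where the geodesic hypothesis does all the work (it is the higher-dimensional avatar of the axial coordinate being cyclic, i.e. of conservation of angular momentum). Everything else is bookkeeping. I would also note, or explicitly exclude, the degenerate locus $R=0$, where a geodesic meets the axis and the meridian direction is not defined.
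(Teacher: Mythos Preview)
Your proof is correct and follows essentially the same approach as the paper: both establish that $\gamma\wedge\gamma'\wedge\vec x$ is constant by differentiating and using that $\gamma''$ is normal to $K$, and then identify the norm of this trivector with $R\sin\theta$. The only difference is presentational---you compute everything explicitly via the defining function $F$ and the orthogonal splitting $T_{(x,y)}K=\RR\,\mathbf m\oplus(\{0\}\times y^{\perp})$, whereas the paper argues the vanishing of $\gamma\wedge\gamma''\wedge\vec x$ geometrically (coplanarity of $\gamma,\gamma'',\vec x$ via the spherical cross-section) and reads off $R\sin\theta$ as the volume of the parallelepiped spanned by $\gamma,\gamma',\vec x$.
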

	
	\begin{center}
		\begin{tikzpicture}
			\draw[->] (0,0) -- (0,6) node[anchor=west]{\small $x$};
			\filldraw[black] (0,0.3) circle (2pt) node[anchor=north west]{\small $0$}; 
			\bezierq{\pgfpointxy{1}{5}}{\pgfpointxy{3.5}{4}}{\pgfpointxy{1}{1}}{\pgfpointxy{4.5}{3}}{\pgfpointxy{4.5}{0}};
			\bezierq{\pgfpointxy{-1}{5}}{\pgfpointxy{-3.5}{4}}{\pgfpointxy{-1}{1}}{\pgfpointxy{-4.5}{3}}{\pgfpointxy{-4.5}{0}};
			\draw[dashed] (0,4) [partial ellipse=0:180:2.05 and 0.5];
			\draw (0,4) [partial ellipse=180:360:2.05 and 0.5];
			\draw[dashed] (0,2.5) [partial ellipse=0:180:2.64 and 0.55];
			\draw (0,2.5) [partial ellipse=180:360:2.64 and 0.55];
			\filldraw[black] (0,2.5) circle (2pt); 
			\draw[dashed, -{Latex[length=3mm,width=2mm]}] (0,0.3) -- (2.64,2.5);
			\draw[dashed] (0,2.5) -- (2.64,2.5) node[anchor=west]{\small $\gamma(t)$};
			\node at (1.3,2.7) {\tiny $R(\gamma(t))$};
			\draw[very thick,-Latex] (2.64,2.5) -- (2.2,3.5) node[anchor= west]{\tiny $\text{mer}(\gamma(t))$};
			\draw (2.64,2.5) .. controls (2,3) and (0,5) .. (-1,4);
			\draw[very thick,-Latex] (2.64,2.5) -- (1.6,3.4);
			\node at (1.3,3.25) {\small $\gamma'$};
		\end{tikzpicture}
	\end{center}

	\begin{proof}
		First, we will show that the expression $$\gamma(t)\wedge \gamma '(t) \wedge \vec{x}$$ is constant along the geodesic, where $\vec{x}$ is a unit vector directed in the positive direction along the axis of $K$. Then we will show that its value can be interpreted as $$R(\gamma(t)) \cdot \sin(\angle(\gamma'(t),\text{mer}(\gamma(t)))),$$
		which will end the proof.\\
		Let us calculate the derivative of the above mentioned expression and show that is equal to zero. We differentiate the product.
		\begin{align*}
			(\gamma(t)\wedge \gamma '(t) &\wedge  \vec{x})'=\\
			&=\gamma '(t) \wedge \gamma '(t) \wedge  \vec{x} + \gamma(t) \wedge \gamma ''(t) \wedge  \vec{x} + \gamma(t) \wedge \gamma '(t) \wedge 0=\\
			&=\gamma(t) \wedge \gamma ''(t) \wedge  \vec{x} =0.
		\end{align*}
		The last equality holds since the vectors $\gamma(t)$, $\gamma ''(t)$ and $ \vec{x}$ lie in one plane. That is so because $\gamma ''(t)$ is a vector in the normal space to $K$ (as $\gamma$ is a geodesic).\\
		In order to understand how the normal space to $K$ looks like, let us consider an $(n-1)$-dimensional affine hyperplane $S$ passing through $\gamma(t)$ and perpendicular to the axis of $K$. Let us denote the intersection of $S$ and $K$ by $s$. In this case we get that the orthogonal projection of  the vector $\gamma''(t)$ on $s$ is perpendicular to the normal space to $s$. We know that $s$ is an $(n-2)$-dimensional sphere centered on the axis of $K$, thus the  radius in the direction $\gamma''(t)$ from the point $\gamma(t)$ either intersects the axis of $K$, or is parallel to it. 
		
		\begin{center}
			\begin{tikzpicture}
				\draw (0,0) -- (0,1.5);
				\draw[dashed] (0,1.5) -- (0,3);
				\draw (0,3) -- (0,6);
				\draw (-3,1.5) -- (2,1.5)  node[anchor=west]{\huge{$S$}};
				\draw (2,1.5) -- (3,4.5);
				\draw (3,4.5) -- (-2,4.5);
				\draw (-2,4.5) -- (-3,1.5);
				\filldraw[black] (0,3) circle (2pt);
				\filldraw[black] (2,3) circle (2pt) ;
				\draw (0,3) [partial ellipse=320:400:2 and 1] node[anchor=west]{{$s$}};
				\draw[dashed] (0,3) -- (2,3);
				\draw[very thick,-Latex] (2,3) -- (1,3);
				\draw[dashed,very thick,-Latex] (2,3) -- (1,2)node[anchor=east]{{$\gamma''$}};
				\draw[dashed] (1,3) -- (1,2);
			\end{tikzpicture}
		\end{center}

		\noindent Therefore, we indeed get that the vectors $\gamma(t)$, $\gamma ''(t)$ and $ \vec{x}$ lie in the plane passing through the axis of $K$ and the point $\gamma(t)$.\\
		In order to obtain the desired property, we note that the expression $$||\gamma(t)\wedge \gamma '(t) \wedge \vec{x}||$$ is constant and equal to the volume of the parallelepiped spanned by the vectors $\gamma(t)$, $\gamma '(t)$ and $\vec{x}$. However, it is also equal to the area of the parallelogram spanned by vectors $\gamma(t)$ and $\vec{x}$ multiplied by the height of parallelepiped. Keeping in mind that $||\vec{x}||=||\gamma '(t)||=1$ we get that the area of the parallelogram is $R(\gamma(t))$ and the height is $\sin(\angle(\gamma'(t),\text{mer}(\gamma(t))))$ (because $\text{mer}(\gamma(t))))$ is in the plane spanned by $\gamma(t)$ and $\vec{x}$). This gives us the desired expression.
		
		\begin{center}
			\begin{tikzpicture}
				\draw[->] (0,0) -- (0,6) node[anchor=west]{\small $x$};
				\filldraw[black] (0,0.3) circle (2pt) node[anchor=north west]{\small $0$}; 
				\bezierq{\pgfpointxy{1}{5}}{\pgfpointxy{3.5}{4}}{\pgfpointxy{1}{1}}{\pgfpointxy{4.5}{3}}{\pgfpointxy{4.5}{0}};
				\bezierq{\pgfpointxy{-1}{5}}{\pgfpointxy{-3.5}{4}}{\pgfpointxy{-1}{1}}{\pgfpointxy{-4.5}{3}}{\pgfpointxy{-4.5}{0}};
				\draw[dashed] (0,4) [partial ellipse=0:180:2.05 and 0.5];
				\draw (0,4) [partial ellipse=180:360:2.05 and 0.5];
				\draw[dashed] (0,2.5) [partial ellipse=0:180:2.64 and 0.55];
				\draw (0,2.5) [partial ellipse=180:360:2.64 and 0.55]; 
				\draw[-{Latex[length=3mm,width=2mm]}] (0,0.3) -- (2.64,2.5) node[anchor=west]{\small $\gamma(t)$};
				\draw (2.64,2.5) .. controls (2,3) and (0,5) .. (-1,4);
				\draw[very thick,-Latex] (2.64,2.5) -- (1.6,3.4);
				\node at (2,3.45) {\small $\gamma'$};
				\draw (0,0.3) -- (-1.04,1.2);
				\draw (1.6,3.4) -- (-1.04,1.2);
				\draw (-1.04,1.2) -- (-1.04,1.7);
				\draw[dashed] (0,0.8) -- (-1.04,1.7);
				\draw[dashed] (0,0.8) -- (2.64,3);
				\draw (2.64,3) -- (2.64,2.5);
				\draw (2.64,3) -- (1.6,3.9);
				\draw (1.6,3.4) -- (1.6,3.9);
				\draw (-1.04,1.7) -- (1.6,3.9);
			\end{tikzpicture}
		\end{center}

	\end{proof}

	\noindent The above preparation gives us a way to generalize Theorem \ref{main} to the case of $(n-1)$-dimensional surfaces in $\RR^n$. The proof is exactly the same as in the three dimensional case, therefore we leave it to the reader.

	\newpage
	\addcontentsline{toc}{section}{Bibliografia}

\end{document}